\newtheorem{definition}{Definition}[section]
\newtheorem{theorem}[definition]{Theorem}
\newtheorem{lemma}[definition]{Lemma}
\newtheorem{proposition}[definition]{Proposition}
\newtheorem{corollary}[definition]{Corollary}
\theoremstyle{definition}
\newtheorem{remark}[definition]{Remark}
\newtheorem{example}[definition]{Example}
\numberwithin{equation}{section}
 \DeclareMathOperator{\card}{card}
\DeclareMathOperator{\diam}{diam}
\DeclareMathOperator{\spec}{Spec}\DeclareMathOperator{\dip}{DP}
\begin{document}

\begin{center}
{\Large\bf Diameter and diametrical pairs of points in ultrametric
spaces }
\end{center}

\bigskip
\begin{center}
{\bf D. Dordovskyi, O. Dovgoshey  and E. Petrov}
\end{center}

\begin{abstract}
Let $\mathcal{F}(X)$ be the set of finite nonempty subsets of a
set $X$. We have found the necessary and sufficient conditions under
which for a given function $\tau:\mathcal{F}(X)\rightarrow\mathbb{R}$
there is an ultrametric on $X$ such that
$\tau(A)=\diam A$ for every $A\in\mathcal{F}(X)$. For  finite
nondegenerate ultrametric spaces $(X,d)$ it is shown that  $X$
together with the subset of  diametrical pairs of points of $X$
forms a complete $k$-partite graph, $k\geqslant 2$, and, conversely,
every finite complete $k$-partite graph with $k\geqslant 2$ can be
obtained by this way. We use this result to characterize
the finite ultrametric spaces $(X,d)$ having the minimal
\mbox{$\card\{(x,y):d(x,y)=\diam  X,\mbox{ } x,y \in X \}$} for given $\card X$.
\end{abstract}

\bigskip
{\bf Mathematics Subject Classification (2000):} 54E35.

\bigskip
{\bf Key words:} ultrametric space, complete $k$-partite graph,
 diameter of sets in ultrametric spaces.

\section{Introduction}
\normalsize

A metric space $(X,d)$ is {\it ultrametric} if the following {\it
ultrametric inequality}
$$
d(x,y)=d(x,z)\vee d(z,y)
$$
holds for all $x,y,z\in X$, where we put
$$
d(x,z)\vee d(z,y)=\max\{d(x,z),d(z,y)\}.
$$

The ultrametric spaces and, in particular, the finite ultrametric
spaces play an important role for the modern physics, data analysis,
mathematical taxonomy and many others branches of the knowledge (see, for example, \cite{L2}).
In particular, the following fact seems to be fundamental in the theory of
hierarchical models:
\begin{theorem}[see \cite{CM}]\label{th1.1}
For every dendrogram $\mathcal{D}$ (= nested
family of partitions) over a finite set $X$ there is an ultrametric
$d$ on $X$ representing this $\mathcal{D}$.
\end{theorem}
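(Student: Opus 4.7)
The plan is to build $d$ by the ``first merger height'' construction. A dendrogram $\mathcal{D}$ over a finite set $X$ amounts to a finite chain of partitions $P_0 \prec P_1 \prec \cdots \prec P_n$ together with strictly increasing heights $0 = h_0 < h_1 < \cdots < h_n$, where $P_0$ is the partition into singletons, $P_n = \{X\}$, and $P_{i+1}$ is coarser than $P_i$. For $x,y \in X$ I would set
$$
d(x,y) = h_{i(x,y)}, \qquad i(x,y) = \min\{i : x \text{ and } y \text{ lie in the same block of } P_i\}.
$$
The minimum exists because $P_n = \{X\}$, and $d(x,y) = 0$ precisely when $x = y$ since only singletons appear in $P_0$; symmetry is obvious.

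The core step is the ultrametric inequality. Fix $x,y,z \in X$ and put $i = i(x,z)$, $j = i(z,y)$, and $m = i \vee j$. The block of $P_m$ containing $z$ contains the $P_i$-block of $z$ (hence $x$) and the $P_j$-block of $z$ (hence $y$), because the partitions are nested. Thus $x$ and $y$ share a block of $P_m$, so $i(x,y) \leq m$, and after applying the monotone function $h$ this yields
$$
d(x,y) \leq h_m = h_i \vee h_j = d(x,z) \vee d(z,y),
$$
which is exactly the ultrametric inequality (and therefore, a fortiori, the triangle inequality).

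It remains to verify that $d$ represents $\mathcal{D}$, i.e.\ that for every level $i$ the equivalence classes of the relation $d(\cdot,\cdot) \leq h_i$ coincide with the blocks of $P_i$. This is immediate from the definition of $i(x,y)$ together with the monotonicity of the chain $(P_i)$: two points lie in a common block of $P_i$ iff $i(x,y) \leq i$ iff $d(x,y) \leq h_i$. The only real obstacle here is one of formulation — the statement hides the precise definitions of ``dendrogram'' and ``representing'', and in an infinite setting one would have to replace the minimum by an infimum and impose right-continuity to guarantee that the infimum is attained. Under the finiteness hypothesis of the theorem these subtleties disappear and the construction above yields the required ultrametric.
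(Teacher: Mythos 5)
Your construction is correct and is essentially the standard argument from the cited source \cite{CM}: the paper itself states Theorem~\ref{th1.1} without proof, and the ``first merger height'' ultrametric $d(x,y)=h_{i(x,y)}$ is exactly how the dendrogram--ultrametric correspondence is established there. Your verification of the ultrametric inequality via nestedness of the partitions and of the representation property is sound, so no gap remains under the finiteness hypothesis.
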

We do not define more exactly the sense of the word ``representing'' in Theorem~\ref{th1.1}
but note that the ``dual form'' of this theorem will be given in Theorem~\ref{th3.11} below.

There exist also the significant relations between (finite) ultrametric
spaces and many mathematical constructions.
The well known example is the canonical representation of finite
ultrametric spaces by the weighted rooted trees. Note that this
representation can be generalized for the case of representation of
quasi-ultrametric finite spaces by the weighted digraphs, \cite{GV}.
It is also shown in \cite{GV} that finite ultrametric spaces can be
viewed as $k$-person positional games. An isomorphism of categories
of ultrametric spaces and complete, atomic, tree-like, real graduated
 lattices was proved in \cite{L1}.

Some basic results in the theory of (finite) ultrametric spaces were
obtained quite recently.
The isometry groups of finite ultrametric spaces are completely
characterized in \cite{S}.
A set of necessary and sufficient conditions under which the Cartesian
 products of metric spaces are ultrametric was found  in \cite{DM}. The
 counterpart of Gromov-Hausdorff metric in the ultrametric spaces was studied in
 \cite{Z}, \cite{Q}. The infinitesimal structure of pointed metric spaces
 having ultrametric ``tangent'' spaces  was described in \cite{DD}.

In the present paper we solve the following two problems.

Let $(X,\rho)$ be a  metric space and let $A$ be a
subset of $X$. Recall that the {\it diameter} of $A$ is the quantity
$$
\diam A=\sup\{\rho(x,y): x,y\in A\}.
$$
We shall say that a two-element subset $\{a,b\}$ of $A$ is a $\rho$-diametrical pair 
for $A$ if $\rho(a,b)=\diam A$. Write $\mathbb{R}:=(-\infty,\infty)$, $\mathbb{R}^+:=[0,\infty)$ and
$\mathcal{F}(Y)$ for the set of all nonempty finite subsets of a set $Y$.

(a) Under what conditions on a function
$\tau:\mathcal{F}(Y)\rightarrow\mathbb{R}$ there is an ultrametric
$\rho:Y\times Y\rightarrow\mathbb{R}^+$ such that
$$
\diam A=\tau(A)
$$
for every $A\in\mathcal{F}(Y)$?

(b) Characterize the structure of simple graphs $G=(V,E)$ for which there is an ultrametric
 $\rho$ defined on the vertex set $V$ such that the edge set $E$ coincides with
the set of $\rho$-diametrical pairs.

\section{Diameter in ultrametric spaces}

The following theorem describes the characteristic properties of
diameters of finite subsets in ultrametric spaces.

\begin{theorem}\label{t2.1}
Let $X$ be a nonempty set. The following two statements are
equivalent for every function
$\tau:\mathcal{F}(X)\rightarrow\mathbb{R}$.

\begin{itemize}
\item[$(i)$] For all $A,B,C\in\mathcal{F}(X)$ the function $\tau$
satisfies the conditions:
\begin{itemize}
\item[$(i_1)$] $(\tau(A)=0)\Leftrightarrow(\card A=1)$;
\item[$(i_2)$]
\begin{equation}\label{eq2.1}
\tau(A\cup B)\leqslant\tau(A\cup C)\vee\tau(C\cup B).
\end{equation}
\end{itemize}

\item[$(ii)$] There is an ultrametric $\rho:X\times
X\rightarrow\mathbb{R}^+$ such that
\begin{equation}\label{eq2.2}
\tau(A)=\diam A
\end{equation}
for every $A\in\mathcal{F}(X)$.
\end{itemize}
\end{theorem}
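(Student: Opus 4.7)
My plan is to treat the two implications separately, with the construction of the ultrametric from $\tau$ being the substantive direction.

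For the easy implication $(ii)\Rightarrow(i)$, I would observe that $(i_1)$ is immediate from $\tau(A)=\diam A$ together with the fact that $\rho$ is a metric. For $(i_2)$, I would fix $A,B,C\in\mathcal{F}(X)$, pick any $c\in C$, and for arbitrary $x,y\in A\cup B$ distinguish the cases $x,y\in A$ (in which case $\rho(x,y)\leqslant\diam(A\cup C)$), $x,y\in B$ (symmetric), and $x\in A$, $y\in B$, where the ultrametric inequality yields $\rho(x,y)\leqslant\rho(x,c)\vee\rho(c,y)\leqslant\diam(A\cup C)\vee\diam(C\cup B)$. Taking suprema finishes the proof.

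For the main implication $(i)\Rightarrow(ii)$, I would define $\rho\colon X\times X\to\mathbb{R}^+$ by $\rho(x,y):=\tau(\{x,y\})$ for $x\neq y$ and $\rho(x,x):=0$. The symmetry is automatic since $\{x,y\}=\{y,x\}$, and nonnegativity follows by applying $(i_2)$ with $A=B=\{x\}$ and $C=\{x,y\}$: this yields $0=\tau(\{x\})\leqslant\tau(\{x,y\})\vee\tau(\{x,y\})=\tau(\{x,y\})$. Combined with $(i_1)$ this gives $\rho(x,y)>0$ whenever $x\neq y$. The ultrametric inequality is obtained by applying $(i_2)$ to the singletons $A=\{x\}$, $B=\{y\}$, $C=\{z\}$, which reads exactly $\rho(x,y)\leqslant\rho(x,z)\vee\rho(z,y)$. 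So $\rho$ is an ultrametric.

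It remains to show that $\tau(A)=\diam A$ for every $A\in\mathcal{F}(X)$. For $\diam A\leqslant\tau(A)$, I would apply $(i_2)$ to $A'=\{x\}$, $B'=\{y\}$, $C'=A$ (with $x,y\in A$), getting $\rho(x,y)=\tau(\{x,y\})\leqslant\tau(\{x\}\cup A)\vee\tau(A\cup\{y\})=\tau(A)$, and then take the maximum over $x,y\in A$. For the reverse inequality I would argue by induction on $\card A$: the cases $\card A\in\{1,2\}$ are trivial from $(i_1)$ and the definition of $\rho$; for $\card A=n\geqslant 3$, pick distinct $a_1,a_2\in A$ and apply $(i_2)$ with $A'=\{a_1\}$, $B'=A\setminus\{a_1\}$, $C'=\{a_2\}$ to obtain $\tau(A)\leqslant\tau(\{a_1,a_2\})\vee\tau(A\setminus\{a_1\})\leqslant\rho(a_1,a_2)\vee\diam(A\setminus\{a_1\})\leqslant\diam A$, where the second inequality uses the inductive hypothesis on the $(n-1)$-element set $A\setminus\{a_1\}$.

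The only delicate point is choosing the auxiliary set $C$ in $(i_2)$ correctly at each step; in particular, to make the inductive step work one must split $A$ as $\{a_1\}\cup(A\setminus\{a_1\})$ and take $C$ to be a \emph{different} singleton from $A\setminus\{a_1\}$, since the naive choice $C\subseteq\{a_1\}$ collapses the inequality to a tautology. Once this is set up, everything reduces to routine applications of the hypotheses $(i_1)$--$(i_2)$.
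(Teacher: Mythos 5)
Your proposal is correct and follows essentially the same route as the paper: define $\rho(x,y)=\tau(\{x,y\})$, get the ultrametric inequality from $(i_2)$ on singletons, prove $\tau(A)\leqslant\diam A$ by peeling off one point at a time, and get the reverse inequality from a monotonicity-type application of $(i_2)$ with the large set as the auxiliary $C$. Your induction is just a tidier phrasing of the paper's ``repetition of this procedure,'' and your case analysis for $(ii)\Rightarrow(i)$ matches the paper's argument.
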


\begin{remark}
Conditions (i$_1$) and (i$_2$) are closely related to the similar
conditions from the paper of P.~Balk \cite{Balk}. In fact, Balk's paper
was a starting point in our consideration.
\end{remark}

\begin{remark}\label{r2.3}
Let (ii) hold. Properties (i$_1$) and (i$_2$) imply that the map
$\tau:\mathcal{F}(X)\rightarrow\mathbb{R}$ is isotonic and
nonnegative. Indeed, substituting of $A$ for $B$ into \eqref{eq2.1}
we see that
$$
\tau(A)\leqslant\tau(C)
$$
if $A\subseteq C$. Putting $A=\{c\}$ in the last inequality, where $c$
is an arbitrary element of $C$, and using (i$_1$) we obtain
$\tau(C)\geqslant 0$ for $C\in\mathcal{F}(X)$.
\end{remark}

\begin{proof}[\it Proof of Theorem \ref{t2.1}]
(ii)$\Rightarrow$(i). Let $\rho$ be an ultrametric on $X$ such that
$$
\tau(A)=\diam A,\quad A\in\mathcal{F}(X).
$$
Property (i$_1$) is evident. To prove (i$_2$) suppose that
$$
\underset{x,y\in A\cup B}{\max}\rho(x,y)=\underset{x,y\in
A}{\max}\rho(x,y).
$$
It implies
$$
\diam(A\cup B)=\underset{x,y\in
A}{\max}\rho(x,y)\leqslant\underset{x,z\in A\cup
C}{\max}\rho(x,z)
$$
$$
\leqslant\underset{x,z\in A\cup C}{\max}\rho(x,z)\vee \underset{y,z\in B\cup
C}{\max}\rho(y,z)=\diam(A\cup C)\vee\diam(C\cup B).
$$

Similarly we obtain \eqref{eq2.1} if
$\underset{x,y\in A\cup B}{\max}\rho(x,z)=\underset{x,y\in B}{\max}\rho(x,y)$.

Suppose now that $\underset{x,y\in A\cup
B}{\max}\rho(x,y)=\underset{x\in A,y\in
B}{\max}\rho(x,y)=\rho(a,b)$ where $a\in A,\,b\in B $. Then for
every $c\in C$ we have
$$
\diam(A\cup B)=\underset{x\in A,y\in B}{\max}\rho(x,y)= \rho(a,b)
\leqslant\rho(a,c)\vee\rho(b,c)
$$
$$
\leqslant
\underset{x\in A,z\in C}
{\max}\rho(x,z)\vee \underset{y\in B,z\in C}{\max}\rho(y,z)
\leqslant\diam(A\cup
C)\vee\diam(C\cup B).
$$
Property (i$_2$) follows.

(i)$\Rightarrow$(ii). Let $\tau:\mathcal{F}(X)\to\mathbb{R}$ be a
function satisfying  (i$_1$) and (i$_2$). Let us define a
function $\rho$ on $X\times X$ by the rule
$$
\rho(x,y):=\tau(\{x,y\}), \qquad x,y\in X
$$
where $\{x,y\}$ is the set with the elements $x$ and $y$. Property
(i$_1$) implies that $\rho(x,y)=0$ if and only if $x=y$. Moreover,
$\rho$ is nonnegative (see Remark \ref{r2.3}). The ultrametric
inequality $\rho(a,b)\leqslant\rho(a,c)\vee\rho(c,b)$
follows from \eqref{eq2.1} with $A=\{a\}$, $B=\{b\}$ and $C=\{c\}$.
Thus, $\rho$ is an ultrametric. It still remains to prove
\eqref{eq2.2} for  $A\in\mathcal{F}(X)$ with $\card A \geq 3$. Let $A=\{x_1,\ldots,
x_n\}$. Then for $j=1,\ldots,n$ we obtain
$$
\tau(A)=\tau(\{x_1,\ldots,x_{n-1}\}\cup\{x_n\})\leqslant\tau(\{x_1,\ldots,x_{n-1}\}\cup\{x_j\})\vee\tau(\{x_n,x_j\})
$$
$$
=\tau(\{x_1,\ldots,x_{n-1}\})\vee\rho(x_n,x_j)\leqslant\tau(\{x_1,\ldots,x_{n-1}\})
\vee(\underset{1\leqslant j\leqslant n}{\max}\rho(x_n,x_j)).
$$
The repetition of this procedure gives
$$
\tau(A)\leqslant\underset{1\leqslant i\leqslant j\leqslant n
}{\bigvee}\rho(x_i,x_j)=\diam A.
$$
As was shown in Remark \ref{r2.3} $\tau$ is an isotonic map.
Consequently, if $x_i,x_j$ is a diametrical pair of points of $A$, then
$$
\tau(A)\geqslant\tau(\{x_i,x_j\})=\rho(x_i,x_j)=\diam A.
$$
Thus, \eqref{eq2.1} holds for every $A\in\mathcal{F}(X)$.
\end{proof}

\begin{remark}\label{r2.4}
The second part of Theorem \ref{t2.1} shows, in particular, that for
every function $\tau:\mathcal{F}(X)\to\mathbb{R}$ satisfying
(i$_1$)--(ii$_2$) there is a {\it unique} ultrametric $\rho:X\times
X\to\mathbb{R}^+$ such that $\rho(x,y)=\tau(\{x,y\})$ for all $x,y\in X$.
\end{remark}

Theorem \ref{t2.1} can be generalized to the case of functions $\tau$ defined also on
infinite subsets of $X$.

Let $\mathfrak{Y}(X)$ be the set of nonempty subsets of $X$.

\begin{theorem}\label{t2.5}
The following two statements are equivalent for every function
$\tau:\mathfrak{Y}(X)\to\mathbb{R}$.
\begin{itemize}
\item[$(i)$] For all $A,B,C\in\mathfrak{Y}(X)$ the function $\tau$
satisfies conditions (i$_1$) and (i$_2$) from Theorem 2.1 and
the following condition
\begin{itemize}
\item[$(i_3)$]
\begin{equation}\label{eq2.3}
\tau(A)\leqslant\sup\{\tau(B): B\in\mathcal{F}(A) \}.
\end{equation}
\end{itemize}

\item[$(ii)$] There is an ultrametric $\rho:X\times
X\to\mathbb{R}^+$ such that $\tau(A)=\diam A$ for every
$A\in\mathfrak{Y}(X)$.
\end{itemize}
\end{theorem}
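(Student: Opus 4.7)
\medskip
\noindent\textbf{Proof plan for Theorem \ref{t2.5}.}
The strategy is to reduce everything to Theorem \ref{t2.1} by restricting $\tau$ to $\mathcal{F}(X)\subseteq\mathfrak{Y}(X)$ and then to extend the identity $\tau(A)=\diam A$ from finite $A$ to arbitrary $A\in\mathfrak{Y}(X)$ using condition~(i$_3$) together with the isotonicity of~$\tau$. I first observe that the argument from Remark~\ref{r2.3} works verbatim on $\mathfrak{Y}(X)$: substituting $A$ for $B$ in \eqref{eq2.1} gives $\tau(A)\leqslant\tau(C)$ whenever $A\subseteq C$, $A,C\in\mathfrak{Y}(X)$. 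Thus (i$_2$) alone makes $\tau$ isotonic on all of $\mathfrak{Y}(X)$.

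For the implication (ii)$\Rightarrow$(i), conditions (i$_1$) and (i$_2$) are obtained exactly as in the proof of Theorem~\ref{t2.1}, the only change being that the maxima must be replaced by suprema; the three-case analysis there carries over without difficulty because the ultrametric inequality is used pointwise. Condition (i$_3$) is then automatic: writing $\diam A$ as a supremum over two-point subsets of $A$ shows
$$
\diam A=\sup\{\diam\{x,y\}:x,y\in A\}\leqslant\sup\{\diam B:B\in\mathcal{F}(A)\}\leqslant\diam A,
$$
where the last inequality is the isotonicity of $\diam$.

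For the nontrivial direction (i)$\Rightarrow$(ii), I restrict $\tau$ to $\mathcal{F}(X)$ and apply Theorem~\ref{t2.1}; this produces a (unique, by Remark~\ref{r2.4}) ultrametric $\rho$ on $X$ such that $\tau(B)=\diam B$ for every $B\in\mathcal{F}(X)$. It remains to check \eqref{eq2.2} for an arbitrary $A\in\mathfrak{Y}(X)$. For the lower estimate, any finite $B\subseteq A$ yields $\diam B=\tau(B)\leqslant\tau(A)$ by the isotonicity established above, so taking the supremum over $B\in\mathcal{F}(A)$ gives $\diam A\leqslant\tau(A)$. For the upper estimate, condition (i$_3$) combined with $\tau(B)=\diam B$ on $\mathcal{F}(A)$ gives
$$
\tau(A)\leqslant\sup\{\tau(B):B\in\mathcal{F}(A)\}=\sup\{\diam B:B\in\mathcal{F}(A)\}\leqslant\diam A.
$$

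I do not expect any serious obstacle here: the bulk of the work has already been done in Theorem~\ref{t2.1}, and (i$_3$) is precisely the extra hypothesis needed to propagate the equality $\tau=\diam$ from finite to infinite subsets. The only point that requires a moment of care is that the isotonicity argument from Remark~\ref{r2.3} must be verified to apply in the larger setting $\mathfrak{Y}(X)$, but this is immediate from the statement of (i$_2$).
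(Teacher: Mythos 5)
Your proposal is correct and follows essentially the same route as the paper, which likewise deduces Theorem \ref{t2.5} from Theorem \ref{t2.1} by restricting $\tau$ to $\mathcal{F}(X)$ and then using condition (i$_3$) together with isotonicity to pass from finite to arbitrary $A\in\mathfrak{Y}(X)$ (the paper leaves these details, which you supply, to the reader). Your filling in of the converse direction (ii)$\Rightarrow$(i) with suprema in place of maxima, argued pointwise over pairs, is also sound.
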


This theorem can be obtained as a consequence of Theorem \ref{t2.1}.
Indeed, since $\mathcal{F}(X)\subseteq\mathfrak{Y}(X)$, condition
(i) of Theorem \ref{t2.5} implies condition (i) of Theorem
\ref{t2.1}. Consequently, the restriction $\tau|_{\mathcal{F}(X)}$
is a diameter of finite sets for some ultrametric $\rho:X\times
X\to\mathbb{R}^+$. If $\card A=\infty$, then the equality
$\tau(A)=\diam A$  follows from the same equality with
$\card A<\infty$ by application of condition (i$_3$). The details of the
proof we leave to the reader.

\begin{example}\label{ex2.6}
Let $X$ be an infinite set. Define a function
$\tau:\mathfrak{Y}(X)\to\mathbb{R}$ as
$$
\tau(A)=
\begin{cases}
0 \quad\text{if } \card A=1, \\
1 \quad\text{if } 1<\card A<\infty, \\
2 \quad\text{if } \card A=\infty.
\end{cases}
$$
It is easy to prove that condition (i$_3$) does not hold but (i$_1$)
and (i$_2$) take place for this $\tau$. Thus, (i$_1$) and (i$_2$)
do not imply (i$_3$).
\end{example}

The key property of ultrametric spaces is that their balls are
either disjoint or comparable w.r.t. the set-theoretic inclusion.
Let us extend this property to the ``balls'' in $\mathcal{F}(X)$. Let
$\tau$ be a real-valued function on $\mathcal{F}(X)$ satisfying
(i$_1$) and (i$_2$). For $r>0$ and $A\in\mathcal{F}(X)$ define a
``closed'' ball
\begin{equation}\label{eq2.3**}
B_r(A):=\{C\in\mathcal{F}(X) : \tau(A\cup C)\leqslant r \}
\end{equation}
with a center $A$ and radius $r$.

The following properties are evident:

\begin{itemize}
\item[$\bullet$] $B_r(A)=\varnothing$ if and only if $r<\tau(A)$;

\item[$\bullet$] Every nonempty subset $C$ of the set $A$ belongs to the ball $B_r(A)$ if this ball is nonempty;

\item[$\bullet$] $B_{r_1}(A)\supseteq B_{r_2}(A)$ if $r_1\geqslant r_2$.

\end{itemize}

\begin{proposition}\label{prop2.7}
Let $X$ be a nonempty set and $\tau:\mathcal{F}(X)\to\mathbb{R}$ be
a function satisfying condition (i) of Theorem \ref{t2.1}. Then for every ball
$B_r(A)$ and every $C \in \mathcal{F}(X)$ we have
\begin{equation}\label{eq2.5*}
B_r(C)=B_r(A)
\end{equation}
if $C\in B_r(A)$, furthermore, for each pair $B_{r_1}(A_1)$,
 $B_{r_2}(A_2)$ with $r_1\geqslant r_2$ we have either
\begin{equation}\label{eq2.4}
B_{r_1}(A_1)\cap B_{r_2}(A_2)=\varnothing \mbox{ or } B_{r_2}(A_2)\subseteq B_{r_1}(A_1).
\end{equation}

\end{proposition}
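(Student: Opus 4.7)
The plan is to note that condition (i$_2$) for $\tau$ is a strong-triangle-type inequality with respect to the operation $A\cup B$, so the ``balls'' $B_r(A)$ should behave exactly like balls in a genuine ultrametric. First I would establish the interchangeability of centers, equation \eqref{eq2.5*}, and then reduce the dichotomy \eqref{eq2.4} to it.

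For the first assertion, assume $C\in B_r(A)$, so $\tau(A\cup C)\leqslant r$. To prove $B_r(A)\subseteq B_r(C)$, pick an arbitrary $D\in B_r(A)$, i.e.\ $\tau(A\cup D)\leqslant r$. Applying (i$_2$) with the choice ``$A\to C$, $B\to D$, $C\to A$'' yields
\[
\tau(C\cup D)\leqslant\tau(C\cup A)\vee\tau(A\cup D)\leqslant r\vee r = r,
\]
so $D\in B_r(C)$. The reverse inclusion $B_r(C)\subseteq B_r(A)$ is symmetric, replacing $A$ and $C$ throughout (note $\tau(A\cup C)=\tau(C\cup A)$ since we deal with sets). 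This gives \eqref{eq2.5*}.

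For the second assertion, suppose $B_{r_1}(A_1)\cap B_{r_2}(A_2)\neq\varnothing$ and pick $C$ in the intersection. Since $C\in B_{r_1}(A_1)$, the part already proved gives $B_{r_1}(A_1)=B_{r_1}(C)$, and similarly $B_{r_2}(A_2)=B_{r_2}(C)$. The third bullet preceding the proposition says $B_{r_1}(C)\supseteq B_{r_2}(C)$ whenever $r_1\geqslant r_2$, so $B_{r_2}(A_2)\subseteq B_{r_1}(A_1)$, which is exactly the second alternative in \eqref{eq2.4}.

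I do not anticipate a serious obstacle: the whole content is two invocations of (i$_2$) plus the trivial monotonicity of $B_r(A)$ in $r$. The only small point to flag is that (i$_2$) must be applied with the ``free'' set placed in the middle role (the role of $C$ in \eqref{eq2.1}), and that $A\cup C$ and $C\cup A$ are the same set so the hypothesis on $\tau$ is used symmetrically.
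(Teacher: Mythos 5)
Your proof is correct and follows essentially the same route as the paper: two applications of condition (i$_2$) with the common set in the middle role to show centers are interchangeable, then the dichotomy via $B_{r_i}(A_i)=B_{r_i}(C)$ for a common point $C$ together with monotonicity in the radius. Your explicit remark that $\tau(A\cup C)=\tau(C\cup A)$ justifies the symmetry shortcut and matches the paper's two displayed inclusions.
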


\begin{proof}
Let $C$ belong to $B_r(A)$. To prove the inclusion
\begin{equation}\label{eq2.6}
B_{r}(C)\subseteq B_{r}(A)
\end{equation}
suppose that $Y\in B_r(C)$, i.e. $\tau(Y\cup C)\leqslant r$. Since
$C\in B_r(A)$, we have $\tau(A\cup C)\leqslant r$. Using (i$_2$) we obtain
$\tau(A\cup Y)\leqslant\tau(A\cup C)\vee\tau(C\cup Y)$.
Consequently, $\tau(A\cup Y)\leqslant r$ holds, so \eqref{eq2.6} follows.
 To prove \eqref{eq2.5*} it remains to show that
\begin{equation}\label{eq2.6*}
B_{r}(A)\subseteq B_{r}(C).
\end{equation}
For every $Z\in B_r(A)$ we have the inequality
\begin{equation}\label{eq2.7}
\tau(A\cup Z)\leqslant r.
\end{equation}
Since $C\in B_r(A)$, we also obtain
\begin{equation}\label{eq2.8}
\tau(A\cup C)\leqslant r.
\end{equation}
Inequalities \eqref{eq2.7}, \eqref{eq2.8} and condition (i$_1$) imply
$$
\tau(C\cup Z)\leqslant\tau(C\cup A)\vee\tau(A\cup Z)\leqslant r\vee
r=r.
$$
Thus $Z\in B_r(C)$, that implies \eqref{eq2.6*}.

Suppose now that $B_{r_1}(A_1)$ and $B_{r_2}(A_2)$ be some balls
such that $r_1\geqslant r_2$. If $B_{r_1}(A_1)\cap B_{r_2}(A_2)\neq\varnothing$,
 then there is $C\in B_{r_1}(A_1)\cap B_{r_2}(A_2)$. As we already know,
the equalities $B_{r_1}(A_1)=B_{r_1}(C)$ and $B_{r_2}(A_2)=B_{r_2}(C)$
hold. Consequently, the inequality $r_1\geqslant r_2$ implies
\eqref{eq2.4}.
\end{proof}

Using Proposition~\ref{prop2.7} and the characteristic property of bases
for topological spaces \cite[p. 21]{En} we see that the family
$$
\textbf{B}(X):=\{B_r(A):A \in \mathcal{F}(X), r\geqslant \tau (A)\}
$$
is a base for some topology on $\mathcal{F}(X)$. It is well known that
the family of closed balls in each ultrametric space forms a clopen base for
the space, but, in general, it is not the case for the base $\textbf{B}(X)$.

Indeed, suppose that $a,b,c,d$ are some points of $X$ such that
\begin{equation}\label{eq2.12}
\tau (\{a,b\})<\tau(\{c,d\})=\sup\{\tau(A):A\in\mathcal{F}(X)\}
\end{equation}
and the ball
$$
B_{r_1}(\{a,b\}),\quad r_1:=\tau(\{a,b\})
$$
is closed in the topology generated by the base $\textbf{B}(X)$.
The inequality in~(\ref{eq2.12}) and~(\ref{eq2.3**}) imply
$$
\{c,d\}\in \mathcal{F}(X)\backslash B_{r_1}(\{a,b\}).
$$
By the supposition the set $\mathcal{F}(X)\backslash B_{r_1}(\{a,b\})$ is open. Since
$\textbf{B}(X)$ is a base, there is $B_{r_2}(A)\in \textbf{B}(X)$ such that
\begin{equation}\label{eq2.13}
\{c,d\}\in B_{r_2}(A)\subseteq\mathcal{F}(X)\backslash B_{r_1}(\{a,b\}).
\end{equation}
By Proposition~\ref{prop2.7}, we have the equality
$B_{r_2}(A)=B_{r_2}(\{c,d\})$.
The smallest $r\in\mathbb{R^+}$ for which $\{c,d\}\in B_r(\{c,d\})$ equals $\tau(\{c,d\})$.
Consequently, the inequality $r_2\geqslant \tau(\{c,d\})$ holds.
The last inequality and the inclusion in~(\ref{eq2.13}) show that
\begin{equation}\label{eq2.14}
\{a,b\} \notin B_{r_3}(\{c,d\})
\end{equation}
where $r_3:=\tau(\{c,d\})$. The equality in~(\ref{eq2.12}) and~(\ref{eq2.3**}) imply
$A \in B_{r_3}(\{c,d\})$
for every $A\in \mathcal{F}(X)$, contrary to~(\ref{eq2.14}).

\section{Diametrical pairs of points in ultrametric spaces}

To formulate the main results of the present section we recall the basic
definitions from the graph theory  (see \cite{BM} for all terms that
will be used without explicit formulations).

A graph $G$ is an ordered pair $(V,E)$ consisting of a set $V=V(G)$ of
{\it vertices} and a set $E=E(G)$ of {\it edges}. In this paper we consider
only {\it simple} graphs, so each edge is an unordered pair of distinct vertices
and $E(G)$ is a set of two-element subsets of $V(G)$.

\begin{definition}\label{def3.1}
Let $G$ be a simple graph and let $k$ be a cardinal number. The graph $G$ is
$k$-partite if the vertex set $V(G)$ can be partitioned into $k$ nonvoid disjoint
subsets, or parts, in such a way that no edge has both ends in the same part. A $k$-partite
graph is complete if any two vertices in different parts are adjacent.
\end{definition}

We shall say that $G$ is a {\it complete multipartite graph} if there is
 $k \geqslant 1$ such that $G$ is complete $k$-partite, cf. \cite[p. 14]{Di}.

\begin{remark}\label{rem3.2}
It is easy to see that  1-partite graph $G$ is empty in the sense that no two
vertices are adjacent.
\end{remark}

To simplify the formulations  we consider the cases of finite and
infinite ultrametric spaces separately.
\begin{theorem}\label{th3.3}
Let $(X,\rho)$ be an ultrametric space with $\card X = \infty$ and let $G$ be a graph such
that $V(G)=X$ and
\begin{equation}\label{eq3.1}
(\{u,v\}\in E(G))\Leftrightarrow (\{u,v\} \mbox{ is a }\rho\mbox{-}\mbox{diametrical pair
of points for } X).
\end{equation}
 Then $G$ is complete multipartite graph.
Conversely, if $G=(V,E)$ is a complete multipartite graph  with $\card V=\infty$, then
 there is an ultrametric $\rho: V\times V \rightarrow \mathbb{R}^+$ such that
 $E$ is the set of $\rho$-diametrical pairs of points for $V$.
\end{theorem}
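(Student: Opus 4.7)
My plan is to set $d := \diam X$ and introduce the relation $u \sim v \iff \rho(u,v) < d$ on $X$. Reflexivity and symmetry are immediate; transitivity is precisely the ultrametric inequality $\rho(u,w) \leqslant \rho(u,v) \vee \rho(v,w)$. Let $\{V_i\}_{i \in I}$ be the equivalence classes; these will serve as the parts. Two points in a common class have distance strictly less than $d$ and are therefore non-adjacent in $G$; two points $u,v$ in distinct classes satisfy $\rho(u,v) \geqslant d$ by construction and $\rho(u,v) \leqslant d$ by definition of diameter, hence $\rho(u,v) = d$ (in particular $d$ is then finite), so $\{u,v\} \in E(G)$. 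Therefore $G$ is complete $|I|$-partite. The degenerate cases $d = +\infty$, or $d$ finite but unattained, collapse all of $X$ into a single class and make $G$ the edgeless, i.e.\ complete $1$-partite, graph.

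\textbf{Converse, case $k \geqslant 2$.} Given a complete multipartite $G$ with parts $\{V_i\}_{i \in I}$, my plan is to use the two-level ultrametric
\[
\rho(u,v) := \begin{cases} 0, & u = v, \\ 1/2, & u \neq v \text{ in the same part}, \\ 1, & u, v \text{ in distinct parts}. \end{cases}
\]
A short case analysis on how many parts a triple $\{u,v,w\}$ meets verifies the ultrametric inequality (in every configuration, the two largest of the three pairwise distances coincide). The diameter equals $1$, attained precisely on pairs from distinct parts, which by Definition~\ref{def3.1} is exactly $E(G)$.

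\textbf{Converse, case $k = 1$ (the main obstacle).} Now $G$ is edgeless and I must exhibit an ultrametric on the infinite set $V$ whose diameter is a strict supremum. I would pick any surjection $f : V \to \mathbb{N}$ (available because $V$ is infinite) and set
\[
\rho(u,v) := \begin{cases} 0, & u = v, \\ 1 - 2^{-(f(u)+1)}, & u \neq v,\ f(u) = f(v), \\ 1 - 2^{-\max(f(u),\, f(v))}, & f(u) \neq f(v). \end{cases}
\]
The delicate point—which I expect to be the hardest part of the argument—is the coefficient $1 - 2^{-(a+1)}$ inside a fiber: it coincides with $1 - 2^{-\max(a, a+1)}$, and this matching is precisely what rescues the ultrametric inequality on the critical triple with two points in fiber $a$ and one in fiber $a+1$. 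A routine enumeration of the remaining triple configurations then completes the verification, and since $\sup_{u,v}\rho(u,v) = 1$ is never attained, no pair is diametrical, so the edgeless $G$ is realized.
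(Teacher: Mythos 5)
Your forward direction and your $k\geqslant 2$ converse are correct and essentially identical to the paper's argument: the relation $\rho(u,v)<\diam X$ is the equivalence of Lemma~\ref{lem3.5}, and the two-level $\{0,\tfrac12,1\}$ metric is exactly the construction in Lemma~\ref{lem3.7}. The problem is the $k=1$ case, which you rightly single out as the main obstacle (it is the content of the paper's Example~\ref{ex3.10}): the function $\rho$ you define there is \emph{not} an ultrametric. The failure is not on the triple you guard against (two points in fiber $a$ and one in fiber $a+1$) but on the triple with two points in a fiber $a$ and one point in a \emph{strictly lower} fiber $b<a$. Concretely, take $u,w$ with $f(u)=f(w)=2$ and $v$ with $f(v)=1$: then $\rho(u,w)=1-2^{-3}=\tfrac78$ while $\rho(u,v)=\rho(v,w)=1-2^{-\max(2,1)}=\tfrac34$, so $\rho(u,w)>\rho(u,v)\vee\rho(v,w)$. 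Such triples are unavoidable for a general surjection $f:V\to\mathbb{N}$ (some fiber above the bottom level must contain two points whenever $V$ is uncountable, and nothing prevents it in the countable case either), so the claimed ``routine enumeration of the remaining triple configurations'' cannot go through as stated.

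The fix is small but it is precisely where the content lies. Either set the within-fiber distance to the fiber's own level, $\rho(u,v)=1-2^{-f(u)}$ when $u\neq v$ and $f(u)=f(v)$ (taking $f$ onto $\{1,2,3,\dots\}$ to keep positivity); then for two points in fiber $a$ and a third in fiber $b<a$ all three distances equal $1-2^{-a}$, for $b>a$ the two largest equal $1-2^{-b}$, and all configurations pass, while the supremum $1$ is never attained. Or do what the paper does in Example~\ref{ex3.10}: put all the ``multiplicity'' at the bottom level --- fix a countable subset $A_c\subseteq V$ carrying strictly increasing values bounded by an unattained supremum (the Delhomm\'e--Laflamme--Pouzet--Sauer metric $d(x,y)=x\vee y$ of~(\ref{eq3.6})), give every pair outside $A_c$ the constant base distance $\varepsilon$, and shift the rest by $\varepsilon$. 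Your ansatz with an arbitrary surjection and the $a+1$ shift inside fibers is the one arrangement that breaks the ultrametric inequality.
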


\begin{theorem}\label{th3.4}
Let $(X,\rho)$ be an ultrametric space with $2\leqslant \card X < \infty$ and let $G$ be a graph such
that $V(G)=X$ and condition~(\ref{eq3.1}) holds. Then $G$ is a complete $k$-partite graph with $k\geqslant 2$.
Conversely, if $G=(V,E)$ is a finite ($\card V < \infty$) complete multipartite graph  with $k \geqslant 2$, then
 there is an ultrametric $\rho: V\times V \rightarrow \mathbb{R}^+$ such that
 $E$ is the set of $\rho$-diametrical pairs of points for $V$.
\end{theorem}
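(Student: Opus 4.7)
The plan is to prove both directions by exploiting a natural partition of $X$ according to distance from the diameter.

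For the forward direction, I would let $D:=\diam X$, which is strictly positive because $\card X\geqslant 2$ and $\rho$ is a genuine metric. Then I would introduce the relation $u\sim v\iff\rho(u,v)<D$ on $X$. Reflexivity and symmetry are immediate; transitivity is precisely where the ultrametric inequality pays off, since $\rho(u,v)<D$ and $\rho(v,w)<D$ force $\rho(u,w)\leqslant\rho(u,v)\vee\rho(v,w)<D$. Let $V_1,\ldots,V_k$ be the equivalence classes. Because $D>0$ is realized by some pair $(x,y)$ with $\rho(x,y)=D$, such $x,y$ lie in distinct classes, so $k\geqslant 2$. Finally, for $u\in V_i$, $v\in V_j$ with $i\neq j$ one has $\rho(u,v)\geqslant D$ by definition and $\rho(u,v)\leqslant D$ by the very definition of diameter, hence equality; whereas $u,v$ in the same class never give a diametrical pair. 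This identifies $G$ with the complete $k$-partite graph on $V_1,\ldots,V_k$.

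For the converse, given a finite complete $k$-partite graph $G=(V,E)$ with parts $V_1,\ldots,V_k$ and $k\geqslant 2$, I would put
\[
\rho(u,v)=\begin{cases} 0 & \text{if } u=v,\\ 1 & \text{if } u\neq v \text{ and } u,v \text{ lie in the same part},\\ 2 & \text{if } u,v \text{ lie in different parts}.\end{cases}
\]
Symmetry and positivity are obvious. To verify the ultrametric inequality for $u,v,w\in V$, I would run a short case analysis: if $u,v$ lie in different parts so that $\rho(u,v)=2$, then any third vertex $w$ must be in a different part from at least one of $u,v$ (otherwise $u,v,w$ would all coincide partwise), so one of $\rho(u,w),\rho(w,v)$ equals $2$ and the inequality holds; if $u,v$ lie in the same part, the right-hand side is at least $\rho(u,v)\leqslant 1$ trivially. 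The diameter is evidently $2$ and the diametrical pairs are exactly the inter-part pairs, which is precisely $E$ by the complete multipartite hypothesis.

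I do not expect a serious obstacle in either direction; both arguments are elementary once the equivalence relation $\rho(u,v)<D$ is written down. The only point requiring care is, in the converse, to notice that the hypothesis $k\geqslant 2$ is exactly what prevents the degenerate case where $E=\varnothing$ but some diametrical pair must exist in any nondegenerate metric of cardinality $\geqslant 2$; this explains why the theorem cannot be extended to $k=1$, consistent with Remark \ref{rem3.2}.
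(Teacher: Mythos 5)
Your proof is correct and follows essentially the same route as the paper: the forward direction rests on the equivalence relation $\rho(u,v)<\diam X$ (the paper's Lemma~\ref{lem3.5}), and the converse on an explicit three-valued ultrametric (the paper's construction \eqref{eq3.4} in Lemma~\ref{lem3.7}, with values $0,\tfrac12,1$ instead of your $0,1,2$). The only cosmetic difference is that you work directly with the parts instead of passing through the complement-graph characterization of Lemma~\ref{lem3.6}, which the paper uses mainly to treat Theorems~\ref{th3.3} and~\ref{th3.4} uniformly.
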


The proof of these theorems based on the following three lemmas.
\begin{lemma}\label{lem3.5}
Let $(X,\rho)$ be an ultrametric space with $\card X \geqslant 2$. Let
us define the binary relation $\equiv$ on $X$ by the rule
\begin{equation}\label{eq3.2}
(x \equiv y) \Leftrightarrow (\rho(x,y)<\diam X).
\end{equation}
Then $\equiv$ is an equivalence relation on $X$.
\end{lemma}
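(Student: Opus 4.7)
The plan is to verify the three defining properties of an equivalence relation directly from the ultrametric inequality and the hypothesis $\card X \geqslant 2$. Fortunately each is short, and no auxiliary construction is needed.

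First I would dispatch reflexivity and symmetry. Reflexivity amounts to checking $\rho(x,x) = 0 < \diam X$. Because $\card X \geqslant 2$, there exist distinct points $u,v \in X$ with $\rho(u,v)>0$, so $\diam X > 0$ and the strict inequality $0 < \diam X$ holds for every $x \in X$. Symmetry of $\equiv$ is immediate from the symmetry of the metric $\rho$.

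The only step with any content is transitivity, and here the strong ultrametric inequality is exactly what is needed. Assume $x \equiv y$ and $y \equiv z$, i.e.\ $\rho(x,y) < \diam X$ and $\rho(y,z) < \diam X$. Then
\[
\rho(x,z) \leqslant \rho(x,y) \vee \rho(y,z) < \diam X,
\]
so $x \equiv z$. Note that in an ordinary (non-ultrametric) metric space this argument would fail, since the triangle inequality only yields $\rho(x,z) \leqslant \rho(x,y)+\rho(y,z)$, which can equal or exceed $\diam X$; this is precisely where the ultrametric hypothesis is essential.

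The main (very mild) obstacle is just being careful about the strict inequality in transitivity, which survives the passage through the maximum because $\max\{a,b\} < c$ whenever $a < c$ and $b < c$. With the three properties verified, $\equiv$ is an equivalence relation on $X$, completing the proof.
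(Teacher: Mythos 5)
Your proof is correct and follows essentially the same route as the paper: symmetry and reflexivity are immediate (the paper calls them evident, while you rightly note that $\card X \geqslant 2$ guarantees $\diam X > 0$), and transitivity comes from the ultrametric inequality preserving the strict bound $< \diam X$ under the maximum.
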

\begin{proof}
It is evident that $\equiv$ is symmetric and reflexive. Suppose now that we have $x\equiv y$ and
$y\equiv z$ for $x,y,z \in X$. Then, by the ultrametric inequality, we obtain.
$$
\rho(x,z)\leqslant \rho(x,y)\vee \rho(y,z)<(\diam X)\vee (\diam X) =  \diam X.
$$
Hence, $\equiv$ is transitive. Thus, $\equiv$ is an equivalence relation.
\end{proof}
Let us recall that the {\it complement} $\overline{G}$ of a graph $G=(V,E)$ is a
graph whose vertex set is $V$ and whose edges are the pairs of distinct
nonadjacent vertices of $G$.
\begin{lemma}\label{lem3.6}
Let $G=(V,E)$ be a graph. The following properties are equivalent:
\begin{itemize}
\item [(i)] $G$ is a complete multipartite graph;
\item [(ii)] Every connected component of the complement $\overline{G}$ is a complete graph.
\end{itemize}
\end{lemma}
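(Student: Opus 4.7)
The plan is to argue both directions by directly unpacking the definition of the complement and exploiting the simple structure that the complement of a complete multipartite graph must have, namely a disjoint union of cliques.

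For the direction $(i)\Rightarrow(ii)$, I would start with a partition $V(G)=\bigcup_{\alpha\in I}V_{\alpha}$ witnessing that $G$ is complete multipartite. By the definition of complete $k$-partite graph, a pair $\{u,v\}$ of distinct vertices is \emph{non}-adjacent in $G$ precisely when $u,v$ lie in the same part $V_{\alpha}$. Translating to $\overline{G}$, this means $E(\overline{G})=\bigcup_{\alpha\in I}\binom{V_{\alpha}}{2}$. Consequently each $V_{\alpha}$ induces a complete subgraph of $\overline{G}$, and there are no edges of $\overline{G}$ between distinct parts; hence the $V_{\alpha}$ are exactly the connected components of $\overline{G}$ and each is complete (a singleton part giving a $K_{1}$ component).

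For the direction $(ii)\Rightarrow(i)$, let $\{V_{\alpha}\}_{\alpha\in I}$ be the family of vertex sets of the connected components of $\overline{G}$; this is a partition of $V=V(G)$ into nonempty sets. Because each component is a complete graph, any two distinct vertices of the same $V_{\alpha}$ are adjacent in $\overline{G}$, and therefore non-adjacent in $G$. On the other hand, if $u\in V_{\alpha}$ and $v\in V_{\beta}$ with $\alpha\ne\beta$, then $u$ and $v$ lie in different components of $\overline{G}$, so $\{u,v\}\notin E(\overline{G})$, and hence $\{u,v\}\in E(G)$. Taking $\{V_{\alpha}\}_{\alpha\in I}$ as the partition shows that $G$ is complete $|I|$-partite, so in particular a complete multipartite graph as defined after Definition~\ref{def3.1}.

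There is essentially no obstacle: both implications reduce to reading off what ``edge/non-edge'' means in $G$ versus $\overline{G}$, plus the observation that inside a connected component of $\overline{G}$ every pair of vertices is joined by an edge of $\overline{G}$ iff the component is complete. The only mild care needed is allowing singleton parts/components (so $K_{1}$ counts as a complete graph) and allowing infinite index sets, so that the statement covers the infinite setting used later in Theorem~\ref{th3.3}. No properties of ultrametrics enter this lemma, which is purely combinatorial.
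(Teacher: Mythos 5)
Your proof is correct and follows essentially the same route as the paper: the $(i)\Rightarrow(ii)$ direction is the identical edge/non-edge translation via the parts, and your $(ii)\Rightarrow(i)$ argument is the straightforward converse that the paper states but omits, spelled out correctly (including the allowance for singleton components and arbitrary index sets). Nothing to add.
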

\begin{proof}
Suppose $G$ is a complete multipartite graph. Let $\{Y_i:i\in \mathcal{I}\}$ be a partition
of the vertex set $V$ with an index set $\mathcal{I}$ such that the properties described in
Definition \ref{def3.1} hold. Let $u,v \in V$, $u\neq v$. Then $u$ and $v$ are adjacent 
 in $\overline{G}$ if they belong to the same part $Y_i$ and nonadjacent in the opposite case.
Hence, for every $i \in \mathcal{I}$ the subgraph of $\overline{G}$ induced by $Y_i$ is a connected
component of $\overline{G}$ and this component is a complete graph. Implication
(i)$\Rightarrow$(ii) follows. The proof of implication (ii)$\Rightarrow$(i) is also
straightforward and we omit this here.
\end{proof}
\begin{lemma}\label{lem3.7}
Let $X$ be a nonempty set and let $\equiv$ be an equivalence relation on $X$.
If $\card X =\infty$, then there is an ultrametric $\rho:X\times X \rightarrow \mathbb{R}^+$
such that~(\ref{eq3.2}) holds. For finite $X$ with $\card X \geqslant 2$, an ultrametric $\rho:X\times X \rightarrow \mathbb{R}^+$
satisfying~(\ref{eq3.2})  exists if and only if the relation $\equiv$ is different
from the universal relation $\{(x,y):x,y\in X\}$.
\end{lemma}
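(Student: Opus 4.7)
The plan is to split on whether $\equiv$ is the universal relation on $X$.

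Suppose first that $\equiv$ is not universal, so there exist $a, b \in X$ with $a \not\equiv b$. I define
$$
\rho(x,y) := \begin{cases} 0 & \text{if } x = y, \\ 1 & \text{if } x \equiv y \text{ and } x \neq y, \\ 2 & \text{if } x \not\equiv y. \end{cases}
$$
Symmetry and positivity on distinct points are immediate. For the ultrametric inequality, the only non-trivial situation is $\rho(x,y) = 2$: if both $\rho(x,z), \rho(z,y) \leq 1$, then $x \equiv z$ and $z \equiv y$ (using reflexivity when $x = z$ or $z = y$), hence $x \equiv y$ by transitivity, contradicting $\rho(x,y) = 2$. Since $\rho(a,b) = 2$, we have $\diam X = 2$, and $\rho(x,y) < 2 \Leftrightarrow x \equiv y$, which is~\eqref{eq3.2}. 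This proves existence whenever $\equiv$ is not universal, irrespective of $\card X$.

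Suppose now $\equiv$ is the universal relation. Then~\eqref{eq3.2} forces $\rho(x,y) < \diam X$ for every pair $x,y \in X$. If $X$ is finite with $\card X \geq 2$, the supremum defining $\diam X$ is realized as a maximum on at least one pair, so no ultrametric can satisfy~\eqref{eq3.2} --- this gives the only-if direction. When $\card X = \infty$, I would construct $\rho$ whose diameter is not attained. Pick distinct points $a_1, a_2, \ldots \in X$, set $Y_n := \{a_n\}$ for $n \geq 2$ and $Y_1 := X \setminus \{a_n : n \geq 2\}$; this yields pairwise disjoint nonempty parts $\{Y_n\}_{n\geq 1}$ covering $X$. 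For $x \in Y_n$, $y \in Y_m$, $x \neq y$, put
$$
\rho(x,y) := 1 - \frac{1}{1+\max(n,m)},
$$
and $\rho(x,x) := 0$. Symmetry is clear, positivity on distinct points follows from $\max(n,m) \geq 1$, and the ultrametric inequality reduces to the monotonicity $\max(n_x, n_y) \leq \max(n_x, n_y, n_z)$. The supremum of $\rho$ on $X \times X$ equals $1$ but is never attained, so $\diam X = 1$ and $\rho(x,y) < \diam X$ for every pair --- matching the universally true left-hand side of~\eqref{eq3.2}.

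The only genuinely delicate point is the infinite universal case, where one must produce an honest ultrametric whose diameter is not realized; the other cases reduce to the $\{0,1,2\}$-valued construction and the observation that the diameter of a finite metric space is always attained.
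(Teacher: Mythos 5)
Your proof is correct, and its skeleton is the same as the paper's: split on whether $\equiv$ is the universal relation, build a two-valued ultrametric from the equivalence classes when it is not (your values $\{0,1,2\}$ versus the paper's $\{0,\tfrac12,1\}$ --- the same transitivity argument verifies the strong triangle inequality), and use the fact that the diameter of a finite nondegenerate metric space is attained to get the ``only if'' half in the finite case. The one place where you genuinely diverge is the infinite universal case, where what is needed is an ultrametric satisfying~\eqref{eq3.3}, i.e.\ one whose diameter is a supremum that is never attained. The paper does not construct this inside the proof of the lemma; it acknowledges the gap in Remark~\ref{rem3.7*} and fills it later in Example~\ref{ex3.10} by taking a countable subset $A_c\subseteq X$, a bijection onto a countable $A\subseteq\mathbb{R}^+$ with $\sup A\notin A$, and the $\varepsilon$-shifted spectrum ultrametric $d(x,y)=x\vee y$ of Delhomm\'{e}--Laflamme--Pouzet--Sauer. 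Your construction --- singleton parts $Y_n=\{a_n\}$, one large part $Y_1$, and $\rho(x,y)=1-\frac{1}{1+\max(n,m)}$ --- is a direct, self-contained alternative: the ultrametric inequality is immediate from the monotonicity of $k\mapsto 1-\frac{1}{1+k}$ together with $\max(n_x,n_y)\leqslant\max(n_x,n_y,n_z)$ (the degenerate cases $z\in\{x,y\}$ being trivial), and the supremum $1$ is visibly not attained. What the paper's route buys is a reusable construction tied to the notion of spectrum, which it exploits elsewhere; what yours buys is that the lemma is proved in one piece without deferring to a later example. Both rely equally on extracting a countable sequence of distinct points from the infinite set $X$.
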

\begin{proof}
Consider first the case where $\card X=\infty$. If $\equiv$ is the universal relation on $X$, then
every ultrametric $\rho:X\times X\rightarrow \mathbb{R}^+$ satisfying the inequality
\begin{equation}\label{eq3.3}
\rho(x,y)<\diam X
\end{equation}
for all $x,y \in X$ satisfies also~(\ref{eq3.2}). Suppose that $\equiv$ is different from the universal
relation, then the partition of the set $X$ on the equivalence classes
$$\{y:y\equiv x\}, \quad x\in X$$
contains more than one class. Let us define a function $\rho$ on $X\times X$ as

\begin{equation}\label{eq3.4}
\rho(x,y):=\left\{
\begin{array}{l}
\mbox{ }0\quad \mbox{if } x=y,\\\
\frac{1}{2}\quad\mbox{if } x\neq y\mbox{ and } x\equiv y,\\\
1\quad\mbox{if }x\not\equiv y.\\
\end{array}
\right.
\end{equation}
We claim that $\rho$ is an ultrametric on $X$ and~(\ref{eq3.2}) holds for this $\rho$.
To prove that $\rho$ is an ultrametric it is suffice to show that
\begin{equation}\label{eq3.5}
\rho(x,z)\leqslant\rho(x,y)\vee\rho(y,z)
\end{equation}
for pairwise distinct $x,y,z \in X$. The proof is very simple. Since $x,y$ and $z$ are pairwise
distinct, inequality~(\ref{eq3.5}) does not hold if and only if $\rho(x,z)=1$ and 
$\rho(x,y)=\rho(y,z)=\frac{1}{2}$. Hence, by~(\ref{eq3.4}),  we obtain
$$
x \not\equiv z, \quad x\equiv y\mbox{ and } y\equiv z,
$$
contrary to the transitivity of $\equiv$.
Suppose now that $\card X <\infty$. If there is an ultrametric $\rho:X\times X\rightarrow \mathbb{R}^+$
such that~(\ref{eq3.2}) holds, then using the finiteness of $X$ we can find $x,y \in X$ such that
$$
\rho(x,y)=\diam X.
$$
Consequently, $\equiv$ is different from the universal relation. The remaining
part of the proof can be obtained by a repetition of the arguments given above.
\end{proof}
\begin{remark}\label{rem3.7*}
In the proof of Lemma~\ref{lem3.7} we use the existence of an ultrametric
$\rho$ satisfying~(\ref{eq3.3}) for all $x,y \in X$. The ``explicit'' construction
of such type metric for arbitrary $\card X$ we shall give in Example~\ref{ex3.10}.
\end{remark}

\begin{proof}[Proof of Theorem~\ref{th3.3}]
Let $(X,\rho)$ be an ultrametric space, $\card X =\infty$ and let $G$  be a graph
with $V(G)=X$ and $E(G)$ defined by~(\ref{eq3.1}). By Lemma~\ref{lem3.6}, $G$ is complete
multipartite if  every connected component of $\overline{G}$ is complete. If
$x$ and $y$ are distinct points, then
$$
(\{x,y\}\in E(\overline{G}))
\Leftrightarrow
 (\{x,y\}\notin E(G))
 \Leftrightarrow
(\rho(x,y)<\diam X)
 \Leftrightarrow
(x\equiv y),
$$
(see~(\ref{eq3.2})).
By Lemma~\ref{lem3.5}, the relation $\equiv$ is an equivalence on $X$.
The equivalence classes induced by $\equiv$ form the set of the connected
components of $\overline{G}$. All these components are complete because $\equiv$ is
transitive.

Conversely, if $G=(V,E)$    is a complete multipartite graph with $\card V =\infty$, then,
by Lemma~\ref{lem3.6},  every connected component of $\overline{G}$ is a complete graph.
Let us define the relation $\equiv$ on $V$ as
$$
(x\equiv y)\Leftrightarrow (x \mbox{ and } y \mbox{ belong to the same connected component of } \overline{G}).
$$
Since every component is complete, the relation $\equiv$ is an equivalence relation
on $V$. By Lemma~\ref{lem3.7}, there is an ultrametric
$\rho:V \times V\rightarrow \mathbb{R}^+$ such that
$$
(x\equiv y)\Leftrightarrow (\rho(x,y)<\diam V)
$$
for $x,y \in V$. Consequently, we obtain
$$
(x\not\equiv y)\Leftrightarrow(\{x,y\}\in E(\overline{\overline{G}}))
\Leftrightarrow(\{x,y\}\in E(G))
\Leftrightarrow (\rho(x,y)=\diam V).
$$
Thus, $E(G)$ is the set of $\rho$-diametrical pairs of $(V,\rho)$.
\end{proof}
Theorem~\ref{th3.4} can be proved in much the same way as Theorem~\ref{th3.3}.
\begin{remark}\label{rem3.8}
The second part of Lemma~\ref{lem3.7} is, in fact, a special particular case of Theorem~\ref{th1.1}.
Lemma~\ref{lem3.6} provides a transition of results from complete multipartite graphs to equivalence
relations and back. 
\end{remark}

Let $(X,d)$ be a nonempty metric space. Following  paper~\cite{DLPS}
we shall say that the set
$$
\spec(X,d)=\{d(x,y):x,y\in X\}
$$
is the {\it spectrum} of $(X,d)$.

Let $A$ be a subset of $\mathbb{R}^+$. If $0 \in A$, then there is an ultrametric
space $(X,\rho)$ such that $\spec (X,\rho)=A$.
This result was proved in \cite{DLPS} by the following elegant method. Define
$d: A \times A\rightarrow \mathbb{R}^+$, setting
\begin{equation}\label{eq3.6}
d(x,y)=
\left\{
\begin{array}{l}
x\vee y \quad \mbox{if } x \neq y, \\\
0 \quad\mbox{otherwise. }\\
\end{array}
\right.
\end{equation}
Then $(A,d)$ is an ultrametric space and $\spec (A,d)=A$.

Using ultrametric~(\ref{eq3.6}) we can simply fill the gap in the proof of Lemma~\ref{lem3.7}
mentioned in Remark~\ref{rem3.7*}.
\begin{example}\label{ex3.10}
Let $X$ be an infinite set, $A_c$ a countable infinite subset of $X$, $A$ a countable
infinite subset of $\mathbb{R}^+$, $0\in A$,
\begin{equation}\label{eq3.7}
\sup A \notin A
\end{equation}
and let $f:A_c\rightarrow A$  be a bijection. For an arbitrary $\varepsilon>0$
define a function $\rho:X\times X\rightarrow \mathbb{R}^+$ as
\begin{equation}\label{eq3.8}
\rho(x,y):=\left\{
\begin{array}{l}
0 \quad\mbox{if } x=y, \\\
\varepsilon\quad \mbox{if } x\neq y \mbox{ and } x,y\in X\backslash A_c, \\\
\varepsilon+d(f(x),f(y))\quad \mbox{if } x\neq y \mbox{ and } x,y\in  A_c, \\\
\varepsilon+f(x)\quad \mbox{if } x\in A_c \mbox{ and } y\in  X\backslash A_c, \\\
\varepsilon+f(y)\quad \mbox{if } x\in X\backslash A_c \mbox{ and } y\in  A_c \\
\end{array}
\right.
\end{equation}
where $d$ is defined by~(\ref{eq3.6}). It is simple to see that $\rho$ is symmetric
and nonnegative. To prove the ultrametric inequality it is sufficient to show that
\begin{equation}\label{eq3.9}
\rho(x,y)\leqslant\rho(x,z)\vee\rho(z,y)
\end{equation}
for $x\in A_c$, $y\in X\backslash A_c$ and $x\neq z\neq y$.
Using these relations,~(\ref{eq3.6}) and~(\ref{eq3.8}) we obtain
$$
\rho(x,y)=\varepsilon+f(x), \quad \rho(x,z)\geqslant \varepsilon+f(x) \quad \mbox{and} \quad \rho(z,y)\geqslant \varepsilon,
$$
so~(\ref{eq3.9}) follows. Hence, $\rho$ is an ultrametric.
\end{example}

 Note that~(\ref{eq3.8}) and~(\ref{eq3.7}) imply~(\ref{eq3.3}). Lemma~\ref{lem3.7} is
 completely proved now.

Using the idea of spectrum of ultrametric spaces we can give the following ``dual form'' of
Theorem~\ref{th1.1}.
\begin{theorem}\label{th3.11}
Let $X$ be  a finite set of points, $\card X \geqslant 2$, $n$ be an integer number,
 $1\leqslant n \leqslant \card X -1$, and let $\{G_1,...,G_n\}$ be
a  set of complete multipartite graphs with $V(G_1)=...=V(G_n)=X$ satisfying the following conditions:
\begin{itemize}
  \item [($i_1$)] $G_1$ is complete and $G_n$ is nonempty in the sense that $\{x,y\}\in E(G_1)$
  for all distinct $x,y \in X$ and $E(G_n)\neq\varnothing$;
  \item [($i_2$)] $E(G_{i+1})$ is a proper subset of $E(G_i)$ for $i=1,...,n-1$.
\end{itemize}
Then there is an ultrametric space $(X,\rho)$ with 
\begin{equation}\label{eq3.10}
\spec(X,\rho)=\{a_0,a_1,...,a_n\}, 0=a_0<a_1<...<a_n,
\end{equation}
such that for $x,y \in X$
\begin{equation}\label{eq3.11}
(\{x,y\} \in E(G_i))\Leftrightarrow (\rho(x,y)\geqslant a_i), \quad i=1,...,n.
\end{equation}
Conversely, if $(X,\rho)$ is an ultrametric
space with spectrum~(\ref{eq3.10}) and if $\{G_1,...,G_n\}$ is a set of graphs such
that $V(G_1)=...=V(G_n)=X$ and~(\ref{eq3.11}) holds,  then all $G_i$ are complete
multipartite graphs meeting conditions ($i_1$) and ($i_2$).
\end{theorem}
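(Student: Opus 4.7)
The plan is to prove both directions by constructing a correspondence between the nested sequence of complete multipartite graphs and the level sets of the ultrametric. The central observation is that, by Lemma~\ref{lem3.6}, for each complete multipartite graph $G_i$ on $X$ the non-adjacency relation (together with the diagonal) is an equivalence relation $\sim_i$ on $X$, whose equivalence classes are precisely the parts of $G_i$. Moreover, the proper inclusion $E(G_{i+1})\subsetneq E(G_i)$ translates into $\sim_i\,\subseteq\,\sim_{i+1}$, i.e.\ the partition induced by $G_{i+1}$ is coarser than the one induced by $G_i$.

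For the forward direction, I would define $\rho(x,x):=a_0=0$, and for $x\neq y$ set
\[
\rho(x,y):=a_{i(x,y)},\qquad i(x,y):=\max\{i\in\{1,\dots,n\}:\{x,y\}\in E(G_i)\}.
\]
The maximum exists and is at least $1$ because $G_1$ is complete, and equality~(\ref{eq3.11}) is then immediate from the nesting $E(G_1)\supsetneq\cdots\supsetneq E(G_n)$. The spectrum equals $\{a_0,\dots,a_n\}$ because each $E(G_i)\setminus E(G_{i+1})$ is nonempty (for $i<n$) and $E(G_n)\neq\varnothing$, providing pairs of every prescribed $\rho$-value.

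The main obstacle, and the only step requiring care, is the ultrametric inequality. Take pairwise distinct $x,y,z$ with $\rho(x,y)=a_p$, $\rho(x,z)=a_q$, $\rho(y,z)=a_r$ and, without loss of generality, $p\leqslant q$. I want to show $r\leqslant q$. If $r>q$, then $r\geqslant q+1$, so $\{y,z\}\in E(G_{q+1})$, i.e.\ $y\not\sim_{q+1}z$. On the other hand, maximality of $q$ at the pair $\{x,z\}$ gives $x\sim_{q+1}z$; maximality of $p\leqslant q$ at the pair $\{x,y\}$ gives $x\sim_{p+1}y$, which together with the nesting $\sim_{p+1}\,\subseteq\,\sim_{q+1}$ yields $x\sim_{q+1}y$. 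Transitivity of $\sim_{q+1}$ then forces $y\sim_{q+1}z$, a contradiction. Thus $\rho(y,z)\leqslant\rho(x,y)\vee\rho(x,z)$, and $\rho$ is an ultrametric on $X$.

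For the converse, assume $(X,\rho)$ has spectrum $\{a_0,\dots,a_n\}$ and define $G_i$ by~(\ref{eq3.11}). Property ($i_1$) holds since every pair of distinct points has $\rho(x,y)\geqslant a_1$ (completeness of $G_1$) and some pair achieves $\rho(x,y)=a_n$ (nonemptiness of $G_n$); property ($i_2$) follows from the fact that $a_i\in\spec(X,\rho)$ provides a pair in $E(G_i)\setminus E(G_{i+1})$. Finally, each $G_i$ is complete multipartite: the relation $x\sim_i y:\Leftrightarrow\rho(x,y)<a_i$ is an equivalence relation (reflexivity and symmetry are clear; transitivity is a verbatim repetition of Lemma~\ref{lem3.5} with $a_i$ in place of $\diam X$), and its classes are exactly the connected components of $\overline{G_i}$, each of which is complete by transitivity; Lemma~\ref{lem3.6} then gives the conclusion.
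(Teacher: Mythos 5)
Your proof is correct. Note that the paper itself gives no proof of Theorem~\ref{th3.11} (the authors explicitly omit it as lying beyond the scope of the diameter considerations), so there is no argument of theirs to compare with; your route is the natural one and stays entirely within the paper's toolkit: Lemma~\ref{lem3.6} converts each complete multipartite $G_i$ into the equivalence relation $\sim_i$ of its parts, the chain $E(G_1)\supsetneq\dots\supsetneq E(G_n)$ becomes a nested family of partitions, and $\rho(x,y):=a_{i(x,y)}$ with $i(x,y)$ the last level at which $\{x,y\}$ is still an edge realizes exactly the dendrogram correspondence alluded to in Theorem~\ref{th1.1} and Remark~\ref{rem3.8}; the converse is a level-set version of Lemma~\ref{lem3.5} plus Lemma~\ref{lem3.6}. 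Your key step (the contradiction via $x\sim_{q+1}z$, $x\sim_{q+1}y$ and transitivity) is sound, including the implicit checks that $q+1\leqslant n$ and $p+1\leqslant n$ when $r>q$. The only points left tacit are harmless: the ultrametric inequality for triples with a repeated point is trivial, and the normalisation $p\leqslant q$ is legitimate because the inequality to be proved, $r\leqslant\max(p,q)$, is symmetric under exchanging $y$ and $z$.
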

This theorem lies beyond our considerations of diameters in ultrametric spaces
so we omit the proof here.

\section{Finite ultrametric spaces with minimal sets of diametrical pairs of points}
Let $(X,d)$ be a nonempty finite ultrametric space. Let us denote by $\dip(X)$
the set of all {\bf ordered} pairs $(x,y)$, $x,y\in X$, for which $d(x,y)=\diam X$, i.e.
$$
\dip(X)=\{(x,y)\in X\times X: d(x,y)=\diam X\}.
$$

It is almost evident that the set $\dip(X)$ cannot be a ``small'' subset of  $X\times X$.
In this section we describe finite ultrametric spaces $(X,d)$ for which $\card \dip(X)$
is smallest among all ultrametric spaces having the same number of elements.

We start from the following simple example.
\begin{example}\label{ex4.1}
Let $(X,d)$ be a nonempty ultrametric space, let $a$ be a point such that $a\notin X$
and let $t$ be a real number for which $\diam X<t$. Write $X_a:=X\cup\{a\}$ and define
\begin{equation*}%\label{eq4.1}
d_{a}(x,y):=\left\{
\begin{array}{l}
d(x,y)\quad \mbox{if } x,y \in X, \\\
0\quad \mbox{if } x=y=a, \\\
t\quad \mbox{if } x\in X, y=a   \mbox{ or } y\in X, x=a.\\
\end{array}
\right.
\end{equation*}
It is easy to see that $(X_a,d_a)$ is an ultrametric space and
$$
(d_a(x,y)=\diam X_a)\Leftrightarrow ((x=a \mbox{ and }y \in X) \mbox{ or } (y=a \mbox{ and }x \in X))
$$
and $\card DP(X_a)=2\card X$.
\end{example}

\begin{theorem}\label{th4.2}
Let $(X,\rho)$ be a finite ultrametric space with $\card X\geqslant 2$. Then the inequality
\begin{equation}\label{eq4.2}
\card \dip(X)\geqslant 2(\card X-1)
\end{equation}
holds. Equality in~(\ref{eq4.2}) is attained if and only if there is an ultrametric
space $(Y,d)$ such that $(X,\rho)$ is isometric to  $(Y_a,d_a)$.
\end{theorem}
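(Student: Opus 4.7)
My plan is to recast the problem graph-theoretically via Theorem~\ref{th3.4} and then solve the resulting combinatorial question using elementary properties of connectedness and cycles.

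First, I would let $G$ denote the graph on the vertex set $X$ whose edges are the unordered $\rho$-diametrical pairs. By Theorem~\ref{th3.4}, $G$ is a complete $k$-partite graph with $k\geqslant 2$. Writing $n:=\card X$, each unordered edge of $G$ contributes exactly two ordered pairs to $\dip(X)$, so $\card\dip(X)=2\,\card E(G)$, and inequality~(\ref{eq4.2}) reduces to $\card E(G)\geqslant n-1$. This will follow from the observation that $G$ is connected: two vertices lying in the same part can always be joined through any vertex in another part, which exists because $k\geqslant 2$. A connected graph on $n$ vertices has at least $n-1$ edges, with equality exactly when it is a tree.

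Next, I would identify which complete multipartite graphs are trees. If $k\geqslant 3$, three vertices chosen from three distinct parts form a triangle, so $G$ cannot be a tree. Hence $k=2$, and writing $G=K_{n_1,n_2}$ with $n_1+n_2=n$, if both $n_1,n_2\geqslant 2$ one obtains a $4$-cycle; therefore the extremal $G$ must be a star $K_{1,n-1}$. In this case there is a unique vertex $a\in X$ with $\rho(a,y)=\diam X$ for every $y\neq a$, whereas $\rho(y,z)<\diam X$ for all distinct $y,z\in X\setminus\{a\}$. Setting $Y:=X\setminus\{a\}$, $d:=\rho|_{Y\times Y}$ and $t:=\diam X$ then exhibits $(X,\rho)$ as the space $(Y_a,d_a)$ from Example~\ref{ex4.1}. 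The converse direction, that any $(Y_a,d_a)$ satisfies $\card\dip(Y_a)=2\card Y=2(\card Y_a-1)$, was already observed in that example.

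I do not anticipate any serious obstacle; the one subtlety that will require a bit of care is verifying that the star $K_{1,n-1}$ is the unique complete multipartite graph which is a tree, which I handle above by successively ruling out $k\geqslant 3$ and then $K_{n_1,n_2}$ with $\min(n_1,n_2)\geqslant 2$.
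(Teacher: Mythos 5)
Your proposal is correct, but it takes a genuinely different route from the paper's. The paper proves inequality~\eqref{eq4.2} directly from the ultrametric inequality (Lemma~\ref{lem4.4}): fixing one diametrical pair $x_1,x_2$, every further point $y$ satisfies $\rho(x_1,y)=\diam X$ or $\rho(x_2,y)=\diam X$, which already yields $\card X-1$ unordered diametrical pairs with no appeal to Theorem~\ref{th3.4}; the equality case (Lemma~\ref{lem4.3}) is then settled by counting edges of the complete $k$-partite graph, $\card \dip(X)=\sum_i m_i(n-m_i)$, showing that $k\geqslant 3$ forces a strict inequality and that $k=2$ leads to the quadratic equation $2m_1(n-m_1)=2(n-1)$ with roots $1$ and $n-1$, i.e.\ to a star. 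You instead run both halves through Theorem~\ref{th3.4}: the graph of diametrical pairs is complete multipartite with $k\geqslant 2$, hence connected, hence has at least $n-1$ edges, with equality exactly for trees; and the only complete multipartite trees with $k\geqslant 2$ are stars, since $k\geqslant 3$ produces a triangle and $K_{n_1,n_2}$ with $\min(n_1,n_2)\geqslant 2$ produces a four-cycle. Your argument is more uniform and makes the extremal structure transparent, whereas the paper's proof of the inequality is more elementary, needing only the ultrametric inequality rather than the full graph-theoretic characterization. Both treatments reduce the ``if'' direction of the equality statement to the computation in Example~\ref{ex4.1}; your identification $Y=X\setminus\{a\}$, $d=\rho|_{Y\times Y}$, $t=\diam X$ quietly uses that all distances inside $Y$ are strictly below $\diam X$, which the star structure indeed provides, so no gap arises there.
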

We divided the proof of Theorem~\ref{th4.2} into next two lemmas.

\begin{lemma}\label{lem4.4}
Let $(X,\rho)$ be a finite ultrametric space with $\card X\geqslant 2$. Then inequality~(\ref{eq4.2})
holds.
\end{lemma}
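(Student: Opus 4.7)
The plan is to reduce the inequality to a short counting problem via the equivalence relation of Lemma~\ref{lem3.5}. Set $n := \card X$ and let $\equiv$ be the relation $x \equiv y \Leftrightarrow \rho(x,y) < \diam X$; by Lemma~\ref{lem3.5} it is an equivalence on $X$. Let $Y_1,\ldots,Y_k$ be its classes, write $n_i := \card Y_i$, and note that $k \geq 2$: since $\card X \geq 2$ we have $\diam X > 0$, so some pair achieves $\rho(x,y) = \diam X$ and hence lies in two distinct classes.

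The key observation is that for $x,y \in X$, one has $(x,y) \in \dip(X)$ if and only if $x$ and $y$ lie in different classes. Indeed, if $x \not\equiv y$ then by definition $\rho(x,y) \geq \diam X$, and the reverse inequality is automatic, so $\rho(x,y) = \diam X$; the converse is immediate from the definition of $\equiv$. Counting ordered pairs yields the exact formula
\[
\card \dip(X) \;=\; n^{2} - \sum_{i=1}^{k} n_i^{2},
\]
so the statement reduces to the purely combinatorial inequality $\sum_{i=1}^{k} n_i^{2} \leq (n-1)^{2} + 1$ for positive integers $n_1,\ldots,n_k$ with $k \geq 2$ summing to $n$.

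For this last step, which is the only real content, I would let $n_1 = \max_i n_i$; since $k \geq 2$ and each $n_i \geq 1$, one has $n_1 \leq n-1$. Because the $n_i$ are nonnegative, $\sum_{i=2}^{k} n_i^{2} \leq \bigl(\sum_{i=2}^{k} n_i\bigr)^{2} = (n - n_1)^{2}$, so $\sum_{i} n_i^{2} \leq n_1^{2} + (n - n_1)^{2}$. The function $t \mapsto t^{2} + (n-t)^{2}$ is convex and symmetric about $t = n/2$, hence on $[1, n-1]$ it attains its maximum at the endpoints, with value $(n-1)^{2} + 1$. Substituting back gives $\card \dip(X) \geq n^{2} - (n-1)^{2} - 1 = 2(n-1)$.

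I do not expect a genuine obstacle here: the graph-theoretic viewpoint of Theorem~\ref{th3.4} makes the argument transparent, since it says precisely that $\dip(X)$ consists of twice the edge set of a complete $k$-partite graph on $n$ vertices with $k \geq 2$, and among such graphs the edge count is minimized by the star $K_{1,n-1}$, with exactly $n-1$ edges. The main subtlety is only to notice that a convexity argument pins down the extremal partition, and to observe that $k \geq 2$ is forced by $\diam X > 0$, which is what makes the bound nontrivial.
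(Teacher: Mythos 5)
Your proof is correct, but it follows a different route from the paper's. The paper's argument for Lemma~\ref{lem4.4} is a three-line direct one: fix a pair $x_1,x_2$ with $\rho(x_1,x_2)=\diam X$; the ultrametric inequality $\rho(x_1,x_2)\leqslant\rho(x_1,y)\vee\rho(y,x_2)$ forces $\rho(x_1,y)=\diam X$ or $\rho(x_2,y)=\diam X$ for every $y\in X\setminus\{x_1,x_2\}$, giving at least $1+(\card X-2)=\card X-1$ unordered diametrical pairs, hence $2(\card X-1)$ ordered ones. You instead pass through the equivalence relation of Lemma~\ref{lem3.5} (equivalently the complete multipartite structure of Theorem~\ref{th3.4}), obtain the exact count $\card\dip(X)=n^2-\sum_i n_i^2$ over the classes, and bound $\sum_i n_i^2\leqslant (n-1)^2+1$ by a convexity argument, where $k\geqslant 2$ is guaranteed because the diameter is attained in a finite space. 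Both arguments are sound; the paper's is shorter and needs nothing beyond the ultrametric inequality, while yours buys more: the exact formula identifies precisely when equality holds (the partition type $\{1,n-1\}$, i.e.\ the star), so your computation essentially absorbs the content of Lemma~\ref{lem4.3} and Corollary~\ref{cor4.5}, and in fact streamlines the paper's own counting in the proof of Lemma~\ref{lem4.3}, which handles the case $k\geqslant 3$ by a somewhat ad hoc chain of inequalities that even cites Lemma~\ref{lem4.4} rather than a clean convexity bound.
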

\begin{proof}
Let $x_1$ and $x_2$ be fixed points of $X$ such that $\rho(x_1,x_2)=\diam X$.
The ultrametric inequality implies that $\rho(x_1,y)=\diam X$ or $\rho(x_2,y)=\diam X$
for every $y\in X\backslash\{x_1,x_2\}$. Consequently, the number of diametrical
pairs of points is greater or equal than $1+\card (X\backslash \{x_1,x_2\})=\card X-1$. Inequality~(\ref{eq4.2}) follows.
\end{proof}

\begin{lemma}\label{lem4.3}
Let $(X,\rho)$ be a finite ultrametric space with $\card X\geqslant 2$. If
\begin{equation}\label{eq4.6}
\card \dip(X) = 2(\card X-1),
\end{equation}
then there is  $x_0 \in X$ such that
\begin{equation}\label{eq4.7}
\diam X=\rho(x_0,x)
\end{equation}
for every $x\in X\backslash \{x_0\}$ and
$\diam X>\rho(x,y)$ for all $x,y \in X\backslash \{x_0\}$.
\end{lemma}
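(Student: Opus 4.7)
The plan is to leverage Theorem~\ref{th3.4}: the graph $G = (X, E)$ whose edge set consists of the unordered $\rho$-diametrical pairs of $X$ is complete $k$-partite with $k \geqslant 2$. Since $\card X \geqslant 2$ forces $\diam X > 0$, every element of $\dip(X)$ is an ordered pair of distinct points, hence $\card \dip(X) = 2\,\card E(G)$. The hypothesis~(\ref{eq4.6}) thus amounts to $\card E(G) = \card X - 1$.

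Writing $n := \card X$, I would first note that a complete multipartite graph with $k\geqslant 2$ parts is connected: any two vertices in different parts are adjacent, and any two in the same part share a common neighbor in another part. Any connected graph on $n$ vertices has at least $n-1$ edges, with equality exactly when it is a tree, and in that case it is triangle-free. A complete multipartite graph is triangle-free if and only if $k\leqslant 2$, so $k=2$. If the two parts have sizes $a$ and $b$ with $a+b = n$, the edge count is $ab$, and the identity $ab = a + b - 1$ rearranges to $(a-1)(b-1) = 0$. Hence one of the parts is a singleton; let $x_0$ be its unique vertex, so that $G$ is the star with center $x_0$ and leaves $X\setminus\{x_0\}$.

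Translating back: the defining property of $E(G)$ yields $\rho(x_0,x) = \diam X$ for every $x \in X\setminus\{x_0\}$, since all these pairs are edges of $G$. For $x,y \in X\setminus\{x_0\}$ the pair $\{x,y\}$ lies inside the second part and is a non-edge of $G$, whence $\rho(x,y)\neq \diam X$; combined with the trivial bound $\rho(x,y)\leqslant \diam X$ this gives $\rho(x,y) < \diam X$, which is exactly the claim.

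The main obstacle is the combinatorial step classifying $G$ as the star $K_{1,n-1}$; the route via connectedness, the tree/edge-count bound, and triangle-freeness of bipartite graphs keeps this cheap. Everything else is either a direct appeal to Theorem~\ref{th3.4} or an unpacking of the bijection between edges of $G$ and diametrical pairs in $(X,\rho)$, and no further difficulty is anticipated.
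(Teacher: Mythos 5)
Your proof is correct, and it reaches the paper's conclusion by a partly different route. You share the paper's starting point: Theorem~\ref{th3.4} makes the diametrical-pair graph $G$ complete $k$-partite with $k\geqslant 2$, and since $\card X\geqslant 2$ gives $\diam X>0$, the hypothesis translates into $\card E(G)=\card X-1$; you also end with the same arithmetic, namely $ab=a+b-1\Leftrightarrow(a-1)(b-1)=0$ forcing a singleton part (the paper phrases this as the quadratic $2m_1(n-m_1)=2(n-1)$ having roots $1$ and $n-1$). Where you genuinely diverge is the elimination of $k\geqslant 3$: the paper does this by a direct count, showing that for $k\geqslant 3$ the number of ordered diametrical pairs $\sum_i m_i(n-m_i)$ strictly exceeds $2m_1(n-m_1)\geqslant 2(n-1)$, whereas you argue structurally that $G$ is connected (two vertices in the same part have a common neighbour in another part), so a complete multipartite graph with exactly $n-1$ edges must be a tree, hence triangle-free, hence $k=2$. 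Your route is conceptually cleaner and, as a byproduct, reproves the lower bound of Lemma~\ref{lem4.4} from connectedness alone, so you never need that lemma as an input (the paper's proof does invoke it inside the $k\geqslant 3$ case); the cost is that you lean on standard facts about trees and triangle-freeness, while the paper's argument is self-contained counting in terms of the part sizes. Your final translation back to the metric statement (edges at the center $x_0$, non-edges inside the big part, plus $\rho(x,y)\leqslant\diam X$) matches the intended reading of the star structure exactly.
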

\begin{proof}
Suppose that~(\ref{eq4.6}) holds. By Theorem~\ref{th3.4} the set $X$ together
with the set of $\rho$-diametrical pairs of points  form a complete  $k$-partite
graph $G$ with $k \geqslant 2$. We must show that $G$ is bipartite ($k=2$) with
a bipartition $(X_1,X_2)$,
\begin{equation}\label{eq4.7*}
X_1\cup X_2=X, \quad X_1\cap X_2=\varnothing
\end{equation}
such that $\card X_1=1$ or $\card X_2=1$.
Let $X_1,...,X_k$ be a partition of $X=V(G)$. Write
$$
m_i=\card X_i,\quad i=1,...,k
$$
and $n=\card X$. If $k\geqslant 3$, then using Lemma~\ref{lem4.4}
and the equality $\sum\limits_{i=2}^{k}m_i=n-m_1$ we obtain
$$
\card DP(X)=\sum\limits_{i=1}^{k}m_i(n-m_i)>m_1(n-m_1)
+\sum\limits_{i=2}^{k}m_i(n-\sum\limits_{i=2}^{k}m_i)
$$
$$
=m_1(n-m_1)+(n-m_1)(n-(n-m_1))\geqslant 2(n-1).
$$
Hence,~(\ref{eq4.6}) does not hold if $k\geqslant 3$. Thus, $k=2$, i.e.
$G$ is a complete bipartite graph. Let us consider bipartition~(\ref{eq4.7*}). Then $n=m_1+m_2$ and
$$
\card \dip(X)=m_1(n-m_1)+m_2(n-m_2)=2m_1(n-m_1).
$$
Using~(\ref{eq4.6}) we obtain
\begin{equation}\label{eq4.10}
2(n-m_1)m_1=2(n-1).
\end{equation}
Quadratic equation~(\ref{eq4.10}) has the roots $1$ and $n-1$. Thus, $\card X_1=1$ or
$\card X_2=1$ as required.
\end{proof}

Theorem~\ref{th4.2} can be reformulated in the graph theory language.
\begin{corollary}\label{cor4.5}
Let $G=(V,E)$ be a finite complete $k$-partite graph with $k \geqslant 2$.
Then
\begin{equation}\label{eq4.12}
\card E \geqslant \card V-1.
\end{equation}
Equality in~(\ref{eq4.12}) is attained if and only if $G$ is a star.
\end{corollary}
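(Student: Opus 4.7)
The plan is to deduce this corollary directly from Theorem~\ref{th4.2} by exploiting the correspondence between complete multipartite graphs and the diametrical-pair structure of finite ultrametric spaces established in Theorem~\ref{th3.4}. Given a finite complete $k$-partite graph $G=(V,E)$ with $k\geqslant 2$, Theorem~\ref{th3.4} supplies an ultrametric $\rho$ on $V$ such that $E$ is precisely the set of (unordered) $\rho$-diametrical pairs of $V$. Since $\dip(V)$ consists of ordered pairs, we have $\card\dip(V)=2\card E$, so Theorem~\ref{th4.2} gives
\[
2\card E \;=\; \card\dip(V)\;\geqslant\; 2(\card V-1),
\]
which is exactly inequality~(\ref{eq4.12}).

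For the equality case, suppose $\card E=\card V-1$. Then $\card\dip(V)=2(\card V-1)$, and by the equality statement of Theorem~\ref{th4.2} the space $(V,\rho)$ is isometric to some $(Y_a,d_a)$ as in Example~\ref{ex4.1}. In $(Y_a,d_a)$ a pair is diametrical if and only if it contains the adjoined point $a$, so in $G$ the edges are exactly the pairs $\{a,y\}$ with $y\in Y$; hence $G$ is a star with center $a$. Conversely, if $G$ is a star on $V$, then trivially $\card E=\card V-1$ and $G$ is complete bipartite (thus complete $k$-partite with $k=2$), so the hypotheses are satisfied.

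The only subtle point is the extremal direction, which is already carried by Lemma~\ref{lem4.3}: it forces $k=2$ and one of the two parts to have cardinality $1$, which is exactly the combinatorial definition of a star. So once the graph-theoretic reformulation is aligned with Theorem~\ref{th4.2}, no additional work is required; the main (and only) obstacle is keeping track of the factor of two between unordered edges of $G$ and ordered pairs in $\dip(V)$, and verifying that the isometry class $(Y_a,d_a)$ corresponds on the graph side precisely to the stars.
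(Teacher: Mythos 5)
Your proof is correct and follows essentially the paper's intended route: the paper states Corollary~\ref{cor4.5} as a graph-theoretic reformulation of Theorem~\ref{th4.2}, obtained exactly as you do by applying Theorem~\ref{th3.4} to realize $E$ as the set of $\rho$-diametrical pairs, using $\card\dip(V)=2\card E$, and identifying the equality case (via Lemma~\ref{lem4.3}, or equivalently the $(Y_a,d_a)$ spaces of Example~\ref{ex4.1}) with stars.
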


\textbf{Acknowledgment.} The second author is thankful to the Finnish
Academy of Science and Letters for the support.

{\bf D. Dordovskyi}

Institute of Applied Mathematics and Mechanics of NASU, R. Luxemburg str. 74, Donetsk 83114, Ukraine

{\bf E-mail: } dordovskydmitry@gmail.com
\bigskip

{\bf O. Dovgoshey}

Institute of Applied Mathematics and Mechanics of NASU, R. Luxemburg str. 74, Donetsk 83114, Ukraine

{\bf E-mail: } aleksdov@mail.ru
\bigskip

{\bf E. Petrov}

Institute of Applied Mathematics and Mechanics of NASU, R. Luxemburg str. 74, Donetsk 83114, Ukraine

{\bf E-mail: } eugeniy.petrov@gmail.com

\end{document}